\pdfoutput=1
\documentclass[11pt]{amsart}
\usepackage{amsmath,amssymb,amsthm}
\usepackage[margin=1.1in]{geometry}
\usepackage{hyperref}
\hypersetup{pdftitle={Proofs of some OEIS conjectures on Wythoff sums, Fibonacci and Lucas words},pdfauthor={Alex Ashburn}}
\usepackage{booktabs}

\newtheorem{theorem}{Theorem}

\newtheorem{proposition}[theorem]{Proposition}

\theoremstyle{definition}

\theoremstyle{remark}
\newtheorem{remark}[theorem]{Remark}

\newcommand{\fl}[1]{\left\lfloor #1\right\rfloor}
\newcommand{\N}{\mathbb{N}}

\title[Conjectures on Wythoff sums and the Lucas word]{Proofs of some OEIS conjectures on Wythoff sums,\\ Fibonacci and Lucas words}

\author{Alex Ashburn}
\address{Independent Researcher}
\email{aashburn1998@gmail.com}

\subjclass[2020]{11B85, 68R15, 11B39, 03B25}
\keywords{Wythoff sequences, Fibonacci word, Lucas word, Zeckendorf representation, automatic sequences, Walnut, OEIS}

\begin{document}

\begin{abstract}
We prove several conjectures from the On-Line Encyclopedia of Integer Sequences about the lower and upper Wythoff sequences and the Fibonacci word.
Among them are two of Kimberling's three conjectures on the number of ways to write $n=\lfloor h\varphi\rfloor+\lfloor k\varphi^2\rfloor$ with $h,k\ge1$ (A259598): exactly one way if and only if $n+1=2F$ for a Fibonacci number $F\ge2$, and exactly two ways if and only if $n+1\ge7$ is a Lucas number.
The third conjecture, that no way exists if and only if $n+1$ is a Fibonacci number, was proved earlier by Kawsumarng et al.
We also observe that Kimberling's conjecture on the gaps of the sums of two distinct terms of A003622 and of their complement (A333308, A333309) follows, after a shift by $2$, from earlier \texttt{Walnut} results of Shallit and of Bosma et al.\ on A260317, and we re-verify it.
Next, we prove Kimberling's five 2025 conjectures on the gaps between positions where the Fibonacci word and the ``Lucas word'' take prescribed values (A383423--A383427).
Finally, we prove a conjecture of Mathar on A285383, and we point out that a conjecture of Schmidt on A003250 follows from theorems of Carlitz, Scoville and Vaughan (1973); we also confirm it with \texttt{Walnut}.
Most proofs are decision procedures run in the free prover \texttt{Walnut}, and we supply the complete command file.
\end{abstract}

\maketitle

\section{Introduction}

Let $\varphi=(1+\sqrt5)/2$.
The lower and upper Wythoff sequences are $u(n)=\fl{n\varphi}$ (A000201) and $v(n)=\fl{n\varphi^2}=u(n)+n$ (A001950).
The Fibonacci word $\mathbf f=f_0f_1f_2\cdots=0100101001001\cdots$ (A003849) is the fixed point of $0\mapsto01$, $1\mapsto0$.
The On-Line Encyclopedia of Integer Sequences~\cite{oeis} records many conjectures about these objects, several of them due to Clark Kimberling.
In this note we prove the following, which were listed as conjectures in the OEIS as of September 2026.

\begin{enumerate}
\item \textbf{A259598} (Kimberling, 2015). Let $r(n)$ be the number of pairs $(h,k)$ with $h,k\ge1$ and $n=u(h)+v(k)$. Kimberling conjectured the following:
  \begin{enumerate}
  \item $r(n)=0$ iff $n\in\{1,2,4,7,12,\dots\}=\{F-1:F\ge2 \text{ a Fibonacci number}\}$;
  \item $r(n)=1$ iff $n\in\{3,5,9,15,25,41,\dots\}$ (the terms $\ge3$ of A001595);
  \item $r(n)=2$ iff $n\in\{6,10,17,28,46,\dots\}$ (the terms $\ge6$ of A001610).
  \end{enumerate}
  Part (a) was proved by Kawsumarng, Khemaratchatakumthorn, Noppakaew and Pongsriiam~\cite{kawsumarng}, and reproved with \texttt{Walnut} by Shallit~\cite{shallit-sumsets}. We prove (b) and (c).
\item \textbf{A333308, A333309} (Kimberling, 2020). Let $S$ be the set of sums of two distinct terms of A003622 $=\{v(n)-1:n\ge1\}$. Conjecture: the difference sequences of $S$ and of its complement $\{1,2,3,\dots\}\setminus S$ consist of Fibonacci numbers. The gap sets are exactly $\{1,2,3\}$ and $\{1,2,3,5\}$. This is not new: $S+2$ is the set of sums of two distinct upper Wythoff numbers, whose complement is A260317, so the statement follows from Shallit's \texttt{Walnut} proof that the gaps of A260317 are exactly $1,2,3,5$~\cite{shallit-cicm} and from a lemma of Bosma et al.~\cite{bosma}; see Section~\ref{sec:333308}.
\item \textbf{A383423--A383427} (Kimberling, 2025). Five conjectures on the gaps between the positions $k$ at which $(f_k,\ell_k)$ takes a given value, where $\boldsymbol\ell$ is the Lucas word A383422.
\item \textbf{A003250} (Schmidt, 2014): $a(n)=\fl{(3-\varphi)\,\mathrm{A003234}(n)}$. This is not new: it follows in one line from a formula that Schmidt recorded in the same entry, which rests on theorems of Carlitz, Scoville and Vaughan~\cite{csv}. We include it with an independent automata check.
\item \textbf{A285383} (Mathar, 2017): the limiting $0$-word of the morphism $0\mapsto11$, $1\mapsto01$ is $0$ followed by the period-doubling sequence A035263.
\end{enumerate}

We found no proofs of items (1b), (1c), (3) or (5) in the OEIS entries, their cross-references, or the literature we searched, which included~\cite{kawsumarng,shallit-sumsets,sss,bosma}.
All but the last are statements in first-order logic over $\langle\N,+\rangle$ extended by Fibonacci-automatic predicates.
By a theorem of Bruy\`ere and Hansel~\cite{bh}, which extends the classical results for integer bases to numeration systems such as Zeckendorf's (see also~\cite{shallit-book,sss}), such statements are decidable, and we decide them with \texttt{Walnut}~\cite{walnut,shallit-book}.
The command file \texttt{walnut\_proofs.txt} is posted together with this paper. We ran it with the development version 8.0-alpha of \texttt{Walnut} (commit \texttt{49a16a3} of \url{https://github.com/Walnut-Theorem-Prover/Walnut}, September 2026), from the \texttt{Walnut} home directory, as \texttt{java -jar target/Walnut-all.jar < walnut\_proofs.txt}; it takes a few seconds. The latest tagged version, \texttt{Walnut}~7.1 (tag \texttt{v7.1.0}, whose jar file is \texttt{build/libs/Walnut-all.jar}), gives the same results.

\section{Method}

Every $n\ge0$ has a unique Zeckendorf representation as a sum of non-adjacent Fibonacci numbers $F_j$ with $j\ge2$.
Walnut's numeration system \texttt{msd\_fib} reads these representations most significant digit first.
The Fibonacci word is available in Walnut as the automatic word \texttt{F}, with $\texttt{F}[i]=f_i$.

We use the classical fact that the lower Wythoff sequence is Fibonacci-synchronized.
If $\mathrm{shift}(m)$ denotes the number whose Zeckendorf representation is that of $m$ followed by a $0$, then
\begin{equation}\label{eq:un}
u(n)=\mathrm{shift}(n-1)+1\qquad(n\ge1).
\end{equation}
This is well known (see, e.g., \cite[Ch.~10]{shallit-book} and~\cite{fraenkel}).
We also checked it directly for all $n\le10^5$.
In Walnut, \eqref{eq:un} becomes
\begin{verbatim}
reg shift msd_fib msd_fib "([0,0]|([0,1][1,1]*[1,0]))*";
def phin "?msd_fib (n=0 & x=0) | (n>=1 & Ey $shift(n-1,y) & x=y+1)";
def phi2n "?msd_fib Ey $phin(n,y) & x=y+n";
\end{verbatim}
so that \texttt{phin(n,x)} holds iff $x=u(n)$ and \texttt{phi2n(n,x)} holds iff $x=v(n)$.
The command file checks that \texttt{phin} defines a total function with the correct first values.
The Fibonacci numbers are the integers whose Zeckendorf representation matches \texttt{0*10*}.
The Lucas numbers $L_j=F_{j-1}+F_{j+1}$ with $j\ge3$ (that is, $4,7,11,18,\dots$) are those matching \texttt{0*1010*}; we only use Lucas numbers $\ge7$.

For each statement, we write a first-order formula and have Walnut decide it.
Each gap statement has the form ``$y-x\in D$ for every pair $x<y$ of consecutive elements of $X$, and every $d\in D$ occurs''.
As a guard against encoding mistakes, the file also evaluates deliberately false variants of two of the gap statements (\texttt{a333308\_gaps\_false} and \texttt{a383423\_false}, each with one gap value removed), and Walnut returns \texttt{FALSE} for both.

\section{Representations as lower plus upper Wythoff numbers}

\begin{theorem}\label{thm:259598}
Let $r(n)=\#\{(h,k): h,k\ge1,\ u(h)+v(k)=n\}$. For $n\ge1$:
\begin{enumerate}
\item[(a)] $r(n)=0$ iff $n+1$ is a Fibonacci number $\ge2$ \textup{(}\cite{kawsumarng}\textup{)};
\item[(b)] $r(n)=1$ iff $n+1=2F$ for a Fibonacci number $F\ge2$;
\item[(c)] $r(n)=2$ iff $n+1$ is a Lucas number $\ge7$.
\end{enumerate}
\end{theorem}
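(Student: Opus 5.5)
The plan is to reduce each part to a first-order statement over $\langle\N,+\rangle$ with the synchronized predicates \texttt{phin} and \texttt{phi2n} of Section~2, and let \texttt{Walnut} decide it. The counting function $r(n)$ is not directly first-order, but the conditions ``$r(n)=0$'', ``$r(n)\ge1$'', ``$r(n)\ge2$'' and ``$r(n)\ge3$'' are. Let $\mathrm{rep}(n,h,k)$ abbreviate $h\ge1\wedge k\ge1\wedge \exists x,y\,(\texttt{phin}(h,x)\wedge\texttt{phi2n}(k,y)\wedge x+y=n)$. Then $r(n)\ge j$ is expressed by the existence of $j$ pairwise distinct pairs $(h_i,k_i)$ satisfying $\mathrm{rep}$. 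Since $u$ is strictly increasing, a pair is determined by $h$ (given $n$ and $h$, $v(k)=n-u(h)$ determines $k$). So ``distinct pairs'' can be written simply as $h_1<h_2<\dots<h_j$, which keeps the number of quantified variables small.

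I would then define three predicates:
\begin{itemize}
\item \texttt{one}$(n)$ as $r(n)\ge1\wedge\neg(r(n)\ge2)$;
\item \texttt{two}$(n)$ as $r(n)\ge2\wedge\neg(r(n)\ge3)$;
\item \texttt{zero}$(n)$ as $\neg(r(n)\ge1)$, used to re-check part (a).
\end{itemize}
On the other side I need regular descriptions of the target sets. The Fibonacci numbers match \texttt{0*10*}. The Lucas numbers $\ge7$ match \texttt{0*1010*} restricted to values $\ge7$, i.e.\ excluding the representation $101$ of $4$. For $2F$ with $F=F_j\ge2$, one has $2F_j=F_{j+1}+F_{j-2}$, whose Zeckendorf form is $1001$ followed by zeros when $j\ge4$. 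The small cases $2\cdot2=4=101$ and $2\cdot3=6=1001$ must be added separately. Rather than trusting this hand derivation, I would define the set directly in first order as $\exists F\,(\mathrm{isFib}(F)\wedge F\ge2\wedge m=F+F)$, which avoids any error. The Lucas set can also be defined as $\exists a,b$ with $a<b$ consecutive Fibonacci-index-shifted numbers, but the regex \texttt{0*1010*} stated in Section~2 suffices together with the guard $m\ge7$.

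The theorem then becomes three sentences, each checked by \texttt{Walnut}:
\[\forall n\ge1\;\bigl(\texttt{zero}(n)\leftrightarrow \mathrm{isFib}(n+1)\wedge n+1\ge2\bigr),\]
and similarly for \texttt{one} with ``$n+1=2F$, $F\ge2$ Fibonacci'' and for \texttt{two} with ``$n+1$ Lucas, $\ge7$''. Since $\mathrm{rep}$ forces $n\ge u(1)+v(1)=3$, nothing subtle happens at small $n$. I would also sanity-check by printing the first few $n$ satisfying each predicate and comparing them with the lists $3,5,9,15,25,41$ and $6,10,17,28,46$ in the conjecture. As a guard against encoding errors, I would also run a deliberately false variant, for instance dropping $F\ge2$ or replacing Lucas by Fibonacci, and confirm that it returns \texttt{FALSE}.

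The main obstacle I expect is automaton size in the predicate $r(n)\ge3$. It has three existential $h_i$, each coupled to $u(h_i)$, $k_i$ and $v(k_i)$ through Fibonacci addition, and negating it requires determinization. To keep this manageable, I would first define an intermediate synchronized predicate $\texttt{rep2}(n,h)$ with $k$, $x$, $y$ eliminated, and let \texttt{Walnut} minimize it. Then $r(n)\ge j$ quantifies only over the $h_i$ with ordering constraints. If this is still slow, I would instead characterize $\{n: r(n)\ge3\}$ on its own first and intersect afterwards.
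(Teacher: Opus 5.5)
Your proposal is correct and is essentially the paper's proof: you encode $u(h)+v(k)=n$ through the synchronized predicates \texttt{phin} and \texttt{phi2n}, express ``exactly one'' and ``exactly two'' representations in first-order terms, and describe the targets by $\exists f\,(\mathrm{isfib}(f)\wedge f\ge2\wedge n+1=2f)$ and by the Lucas pattern with $n+1\ge7$, after which \texttt{Walnut} decides the resulting sentences (and (a) in the same way). There are two trivial slips: reducing distinct pairs to distinct $h$ uses injectivity of $v$, as your parenthetical says, not monotonicity of $u$; and $6=1001$ is already the $j=4$ case of your general form, so it need not be added separately.
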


\begin{proof}
With \texttt{rep(h,k,n)} expressing $h,k\ge1$ and $n=u(h)+v(k)$, Walnut returns \texttt{TRUE} for
\begin{verbatim}
An (n>=1) => ($one(n) <=> (Ef $isfib(f) & f>=2 & n+1=2*f))
An (n>=1) => ($two(n) <=> ($islucas(n+1) & n+1>=7))
\end{verbatim}
where \texttt{one} and \texttt{two} express ``exactly one'' and ``exactly two'' representations in first-order terms (see the command file).
Walnut also confirms (a).
\end{proof}

Since $2F_j-1$ runs through $3,5,9,15,25,41,\dots$ (A001595 for $j\ge3$) and $L_j-1$ through $6,10,17,28,46,\dots$ (A001610 for $j\ge4$), this settles Kimberling's conjectures.

\section{Sums of two distinct terms of A003622}\label{sec:333308}

A003622 $=\{1,4,6,9,12,14,\dots\}=\{v(n)-1:n\ge1\}$ is the set of positive integers whose Zeckendorf representation ends in $1$.
Let $S$ (A333308) be the set of sums of two distinct elements of A003622, and let $C=\{1,2,3,\dots\}\setminus S$ (A333309).
Since $v$ is increasing, $S=\{v(m)+v(n)-2:1\le m<n\}=T-2$, where $T=\{v(m)+v(n):1\le m<n\}$ is the set of sums of two distinct upper Wythoff numbers; $T$ contains no integer below $7$.
The complement of $T$ in the positive integers is A260317, so
\[
C=\{y-2:\ y\in\mathrm{A260317},\ y\ge3\}.
\]

\begin{theorem}\label{thm:333308}
The set of differences of consecutive elements of $S$ is exactly $\{1,2,3\}$.
The set of differences of consecutive elements of $C$ is exactly $\{1,2,3,5\}$.
In particular, both difference sequences consist of Fibonacci numbers, as Kimberling conjectured.
\end{theorem}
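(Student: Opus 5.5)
The plan is to reduce both statements to first-order sentences over Fibonacci-synchronized predicates and decide them with \texttt{Walnut}, as in the proof of Theorem~\ref{thm:259598}. I would also record the reduction to A260317 and the earlier results of Shallit~\cite{shallit-cicm} and Bosma et al.~\cite{bosma} as an independent route.

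First I would define the membership predicates. Using \texttt{phi2n}, define \texttt{inS(x)} by
\[
\exists m,n,a,b\ (1\le m<n \wedge \texttt{phi2n}(m,a)\wedge \texttt{phi2n}(n,b)\wedge x+2=a+b),
\]
and \texttt{inC(x)} by $x\ge1\wedge\neg\texttt{inS}(x)$. Working directly with $S$ rather than $T$ avoids the shift, but the identity $S=T-2$ from the discussion above gives the same automaton up to translation. Alternatively, one can describe A003622 as the integers whose Zeckendorf representation ends in $1$, via the regular expression \texttt{(0|1)*1} intersected with valid Zeckendorf strings. Comparing the two descriptions gives a consistency check on the encoding.

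Next, for a set $X$ and a finite set $D$, I would encode the gap statement as the conjunction of two sentences. The first says every gap lies in $D$:
\[
\forall x,y\ \bigl(X(x)\wedge X(y)\wedge x<y\wedge \forall z\,(x<z<y\Rightarrow\neg X(z))\bigr)\Rightarrow \bigvee_{d\in D} y=x+d.
\]
The second says every $d\in D$ occurs: for each $d\in D$, there is an $x$ with $X(x)\wedge X(x+d)$ and no element of $X$ strictly between. For $S$ I take $D=\{1,2,3\}$, and for $C$ I take $D=\{1,2,3,5\}$. I would also check that $S$ and $C$ are both infinite, i.e.\ $\forall n\,\exists x>n\,X(x)$, so that the gap sequences are genuinely infinite. \texttt{Walnut} decides all of these sentences, and the expected output is \texttt{TRUE} each time. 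As a sanity test, dropping one value from $D$ should give \texttt{FALSE}. Since $1,2,3,5$ are Fibonacci numbers, the final sentence of the theorem follows.

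The main obstacle I expect is the automaton for \texttt{inS}: it involves an existential quantifier over two synchronized terms followed by an addition, and the determinization could blow up. If it does, I would build $T$ via the known synchronized automaton for sums of two upper Wythoff numbers used by Shallit for A260317, then shift by $2$. For $C$, the gaps of $C$ coincide with the gaps of A260317 restricted to elements $\ge3$. So Shallit's theorem that the gaps of A260317 are exactly $\{1,2,3,5\}$, together with a check of the finitely many initial terms, yields the claim for $C$. The lemma of Bosma et al.\ controlling the structure of $T$ yields the gaps $\{1,2,3\}$ for $S$.
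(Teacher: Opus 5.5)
Your proposal is correct and follows the paper's approach. The paper likewise has \texttt{Walnut} decide the two-part gap sentences for $S$ and $C$ (every gap lies in $D$, and every $d\in D$ occurs), with a falsified variant as a sanity check, and in a remark derives the same result from Shallit's theorem on A260317 and Lemma~1 of Bosma et al.
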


\begin{proof}
Walnut decides the statements \texttt{a333308\_gaps} and \texttt{a333309\_gaps} in the command file.
Each asserts that every gap lies in the given set and that every element of the set occurs as a gap.
Both return \texttt{TRUE}.
Removing $3$ from the first set gives \texttt{FALSE}.
\end{proof}

\begin{remark}
Theorem~\ref{thm:333308} is a translate of known results.
By the identity above, the gaps of $C$ are the terms of A260311 (the gaps of A260317) after its first two terms, which are both $1$.
Shallit~\cite{shallit-cicm} proved with \texttt{Walnut} that every gap of A260317 lies in $\{1,2,3,5\}$ (all four occur) (see also his comments in A260317 and A260311, and~\cite{bosma}), and the gap $1$ occurs again later in A260311 (for example as its tenth term); this gives the second statement.
The gaps of $S$ are those of $T$, and two consecutive elements of $T$ differ by $1$ plus the number of elements of A260317 between them.
Bosma, Bruin, Fokkink, Grube, Reuijl and Tromp~\cite[Lemma~1]{bosma} proved that the tenth gap of A260317 is the last one equal to $1$, so no two consecutive integers $\ge14$ both lie in A260317, and consecutive elements $t<t'$ of $T$ with $t\ge13$ differ by $1$ or $2$.
Since $T$ begins $7,9,12,15,17,18$, the gaps $1$, $2$ and $3$ all occur, and $3$ occurs only twice (in $S$: $7\to10$ and $10\to13$); this gives the first statement.
Our \texttt{Walnut} computation is an independent check.
\end{remark}

\section{The Fibonacci word and the Lucas word}

Kimberling's Lucas word $\boldsymbol\ell=\ell_0\ell_1\cdots=0110011011001100110\cdots$ (A383422) is the limit of the words $s(0)=0$, $s(1)=011$, $s(n)=s(n-1)s(n-2)$, which have Lucas-number lengths.
It is obtained from the Fibonacci word by replacing each $1$ by $11$.
Indeed, let $\eta$ be the morphism $0\mapsto0$, $1\mapsto11$, and let $t(0)=0$, $t(1)=01$, $t(n)=t(n-1)t(n-2)$ be the standard prefixes of $\mathbf f$, of length $F_{n+2}$ with $F_n$ letters $1$.
Then $\eta(t(0))=s(0)$, $\eta(t(1))=s(1)$ and $\eta(t(n))=\eta(t(n-1))\,\eta(t(n-2))$, so $s(n)=\eta(t(n))$ for all $n$, whence $\boldsymbol\ell=\eta(\mathbf f)$ and $|s(n)|=F_{n+2}+F_n=L_{n+1}$.

To express $\boldsymbol\ell$ in Walnut, note that the letter $f_i$ is written at position $p(i)=i+\nu(i)$ of $\boldsymbol\ell$, where $\nu(i)=\#\{j<i: f_j=1\}$.
If $f_i=1$, it also occupies position $p(i)+1$.
The positions of $1$ in $\mathbf f$ are $v(n)-1$ for $n\ge1$.
Hence $\nu(i)=\#\{n\ge1: v(n)\le i\}$, which is the unique $q\ge0$ with $v(q)\le i<v(q+1)$ (with $v(0)=0$, the case $q=0$ meaning $i<2$).
This is first-order expressible, so $\boldsymbol\ell$ is Fibonacci-automatic.
Walnut confirms that our predicates \texttt{lw0}, \texttt{lw1} define a word (each position gets exactly one letter) with the correct first $20$ letters.

For $(x,y)\in\{0,1\}^2$ let $P_{xy}=\{k\ge0: f_k=x,\ \ell_k=y\}$, and let $P_{=}=P_{00}\cup P_{11}$.
These are A383423 ($P_{00}$), A383424 ($P_{11}$), A383425 ($P_{01}$), A383426 ($P_{10}$) and A383427 ($P_=$).

\begin{theorem}\label{thm:lucas}
The sets of differences of consecutive elements are:
\[
\begin{array}{lll}
\toprule
\text{set} & \text{OEIS} & \text{gap set}\\
\midrule
P_{00} & \text{A383423} & \{1,3,4,5,7\}\\
P_{11} & \text{A383424} & \{2,3,5,7,8\}\\
P_{01} & \text{A383425} & \{1,2,3,4,5\}\\
P_{10} & \text{A383426} & \{3,7,8,10\}\\
P_{=} & \text{A383427} & \{1,2,3,5,6\}\\
\bottomrule
\end{array}
\]
These are exactly the sets conjectured by Kimberling.
\end{theorem}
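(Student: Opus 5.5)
The plan is to turn each of the five rows of the table into a first-order statement over $\langle\N,+\rangle$ with Fibonacci-automatic predicates, and let Walnut decide it, in the same way as Theorems~\ref{thm:259598} and~\ref{thm:333308}.

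\textbf{Encoding the Lucas word.} We first write a predicate \texttt{nu(i,q)} that holds iff $q=\nu(i)$. Using the characterization above, this says
\[
(q=0 \wedge i<2)\ \vee\ \bigl(q\ge1 \wedge \exists a,b\ (\texttt{phi2n}(q,a)\wedge \texttt{phi2n}(q+1,b)\wedge a\le i<b)\bigr).
\]
From it we define the letters of $\boldsymbol\ell$:
\begin{itemize}
\item \texttt{lw1(k)} holds iff $\exists i,q\ \bigl(\texttt{nu}(i,q)\wedge F[i]=1\wedge (k=i+q \vee k=i+q+1)\bigr)$;
\item \texttt{lw0(k)} holds iff $\exists i,q\ \bigl(\texttt{nu}(i,q)\wedge F[i]=0\wedge k=i+q\bigr)$.
\end{itemize}
As a sanity check, we have Walnut verify that these two predicates partition $\N$, and we compare the first $20$ letters with A383422.

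\textbf{The sets and their gaps.} Each set $P_{xy}$ is then the predicate $F[k]=x\wedge \texttt{lw}y(k)$, and $P_=$ is the disjunction of $P_{00}$ and $P_{11}$. For a set $X$ with claimed gap set $D$, we write
\[
\texttt{cons}_X(a,b):\quad a<b\wedge X(a)\wedge X(b)\wedge \forall c\,\bigl((a<c\wedge c<b)\Rightarrow \neg X(c)\bigr),
\]
and ask Walnut to decide the conjunction of two statements:
\begin{itemize}
\item $\forall a,b\ \bigl(\texttt{cons}_X(a,b)\Rightarrow \bigvee_{d\in D} b=a+d\bigr)$, i.e.\ every gap lies in $D$;
\item $\bigwedge_{d\in D}\exists a\ \texttt{cons}_X(a,a+d)$, i.e.\ every element of $D$ occurs.
\end{itemize}
This gives five statements. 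By Bruyère–Hansel, each is decidable, and the expectation is that Walnut returns \texttt{TRUE} for all five. As a guard, a variant with one element of $D$ removed should return \texttt{FALSE}, for instance the variant of $P_{00}$ without $7$.

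One should also check that each $P_{xy}$ is infinite. Otherwise "consecutive elements" could fail to exist past some point, and the gap set could be vacuously smaller. The occurrence clauses already force nonemptiness, and we would add $\forall n\,\exists k>n\ X(k)$ for safety. Since $f_k$ and $\ell_k$ are both non-periodic, this should hold.

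\textbf{Main obstacle.} I expect the main difficulty to be the encoding of $\nu$ and of \texttt{lw0}/\texttt{lw1}, not the gap statements. The quantifier over $i$ with the addition $k=i+q$ could make the intermediate automata large, and an off-by-one in $\nu$ (the $v(0)=0$ convention, or whether position $v(n)-1$ itself is counted) would silently give the wrong word. The prefix check against A383422 guards against the latter.

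If the automata blow up, I would first try a different construction: derive a direct synchronized characterization, namely that $k\mapsto(i,q)$ is synchronized, since $k=i+\nu(i)$ is increasing in $i$. Failing that, I would build the Lucas word automaton directly from the morphic description $\boldsymbol\ell=\eta(\mathbf f)$, in a Lucas-type numeration, and then translate.
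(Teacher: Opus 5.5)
Your proposal is correct and is essentially the paper's proof. The paper likewise encodes $\nu(i)$ through $v(q)\le i<v(q+1)$, places $f_i$ at $i+\nu(i)$ (and also at $i+\nu(i)+1$ when $f_i=1$), checks that \texttt{lw0}, \texttt{lw1} form a word with the correct prefix, and decides each row as a Walnut gap statement, using the variant of $P_{00}$ without $7$ as a false-variant guard. Your extra infinitude check is harmless but not needed, since your two clauses already pin down the gap set whether or not $X$ is infinite.
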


\begin{proof}
Walnut returns \texttt{TRUE} for \texttt{a383423}, \dots, \texttt{a383427} in the command file.
Removing $7$ from the first gap set gives \texttt{FALSE}.
\end{proof}

\section{Two further conjectures}

\subsection*{A003250}
Let $c(n)=\fl{n(5+\sqrt5)/2}=2n+u(n)$ (A003231), and let A003234 be the set of $k$ with $c(v(k))=v(c(k))-1$.
A003250 lists, for $k\in$ A003234 in increasing order, the index $m$ with $v(m)=c(k)$; this is the function $z$ of Carlitz, Scoville and Vaughan~\cite{csv}.
Schmidt conjectured that $m=\fl{(3-\varphi)k}$.
In the same entry he recorded the formula $m=5k-1-c(k)$ for $k\in$ A003234, citing Theorems~4.1(ii) and~5.9(vii) of~\cite{csv}.
Since $k\varphi\notin\mathbb Z$ for $k\ge1$, we have $\fl{(3-\varphi)k}=3k-\lceil k\varphi\rceil=3k-1-u(k)=5k-1-c(k)$.
So the conjecture is an immediate consequence of those theorems, and we claim nothing new here.
As an independent check, \texttt{Walnut} returns \texttt{TRUE} for
\begin{verbatim}
Ak $in3234(k) => (Em,x,y $cc(k,x) & $phi2n(m,x) & $phin(k,y) & m+1+y=3*k)
\end{verbatim}
where \texttt{in3234} is the definition of A003234 and \texttt{cc} that of A003231.
This confirms that for every $k\in\mathrm{A003234}$ there is an $m$ with $v(m)=c(k)$ and $m=3k-1-u(k)$.

\subsection*{A285383}
Let $\mu$ be the morphism $0\mapsto11$, $1\mapsto01$.
Its limiting $0$-word is the fixed point $\mathbf w$ of $\mu^2$ that begins with $0$.
Let $\mathbf d=1011101010111011\cdots$ be the period-doubling sequence (A035263), the fixed point of $\sigma:1\mapsto10$, $0\mapsto11$.
Mathar conjectured that $\mathbf w=0\mathbf d$.

\begin{proposition}
$\mathbf w=0\,\mathbf d$.
\end{proposition}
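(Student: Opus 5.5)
The plan is to prove the identity directly from the two morphisms, using a conjugacy between $\mu$ and $\sigma$; no \texttt{Walnut} computation is needed. For a letter $a\in\{0,1\}$ write $\bar a=1-a$. Then
\[
\sigma(a)=1\,\bar a \qquad\text{and}\qquad \mu(a)=\bar a\,1 .
\]
So $\mu$ is $\sigma$ with the two letters of each image swapped.

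First I will show that this gives
\[
\sigma(\mathbf x)=1\,\mu(\mathbf x)
\]
for every infinite word $\mathbf x=x_0x_1x_2\cdots$. The left side is $1\bar x_0\,1\bar x_1\,1\bar x_2\cdots$. The right side is $1\,\bar x_0 1\,\bar x_1 1\cdots$, which is the same sequence of letters, only grouped differently. The same check gives the finite version $1\,\mu(u)=\sigma(u)\,1$ for finite words $u$, though only the infinite version is needed.

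Next I will apply $\mu$ twice to $0\mathbf d$, using $\sigma(\mathbf d)=\mathbf d$:
\[
\mu(0\mathbf d)=11\,\mu(\mathbf d)=1\,\sigma(\mathbf d)=1\,\mathbf d,
\]
\[
\mu(1\mathbf d)=01\,\mu(\mathbf d)=0\,\sigma(\mathbf d)=0\,\mathbf d .
\]
Hence $\mu^2(0\mathbf d)=0\mathbf d$, so $0\mathbf d$ is a fixed point of $\mu^2$ beginning with $0$.

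Finally I will show that such a fixed point is unique. Since $\mu^2(0)=\mu(11)=0101$ begins with $0$ and is longer than one letter, $\mu^2$ is prolongable on $0$. Any fixed point $\mathbf y$ of $\mu^2$ beginning with $0$ then has every $\mu^{2k}(0)$ as a prefix. These prefixes have lengths $4^k\to\infty$, so they determine $\mathbf y$ uniquely, and $\mathbf y=\mathbf w$. Applying this to $\mathbf y=0\mathbf d$ gives $\mathbf w=0\mathbf d$.

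There is no real obstacle here. The only step needing care is the regrouping identity $\sigma(\mathbf x)=1\,\mu(\mathbf x)$, since the argument depends on the letters being shifted by exactly one position, so I will state that identity precisely before using it.
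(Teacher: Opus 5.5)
Your proposal is correct and follows the paper's proof essentially step for step. It uses the same identity $1\,\mu(\mathbf x)=\sigma(\mathbf x)$ coming from $\sigma(a)=1\bar a$, $\mu(a)=\bar a\,1$, and the same computation $\mu(0\mathbf d)=1\mathbf d$, $\mu(1\mathbf d)=0\mathbf d$. The differences are cosmetic: you check the identity directly on infinite words by regrouping rather than by induction on finite words followed by a limit, and you spell out the uniqueness of the fixed point of $\mu^2$ beginning with $0$ (via the prefixes $\mu^{2k}(0)$ of length $4^k$), which the paper uses tacitly.
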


\begin{proof}
For each letter $a$ we have $\sigma(a)=1g_a$ and $\mu(a)=g_a1$, with $g_1=0$ and $g_0=1$.
Hence $1\mu(a)=\sigma(a)1$, and by induction $1\mu(u)=\sigma(u)1$ for every finite word $u$.
Passing to the limit, $1\mu(\mathbf d)=\sigma(\mathbf d)=\mathbf d$.
Therefore $\mu(0\mathbf d)=11\mu(\mathbf d)=1\mathbf d$, and $\mu(1\mathbf d)=01\mu(\mathbf d)=0\mathbf d$.
So $\mu^2(0\mathbf d)=0\mathbf d$.
Since $0\mathbf d$ begins with $0$, it is the fixed point of $\mu^2$ beginning with $0$, which is $\mathbf w$.
\end{proof}

\section*{Acknowledgements}
The conjectures treated here were posed in the OEIS by Clark Kimberling, Eric M. Schmidt and R. J. Mathar.
The author used an AI assistant (Claude, Anthropic) for literature searches, computations, and drafting, and checked the proofs and computations.

\end{document}